\newtheorem{lemma}{Lemma}[section]
\newtheorem{proposition}[lemma]{Proposition}
\newtheorem{remark}[lemma]{Remark}
\newtheorem{theorem}[lemma]{Theorem}
\newtheorem{corollary}[lemma]{Corollary}
\newtheorem*{remark*}{Remark}
\def\Symp{\text{Symp}}
\def\Ham{\text{Ham}}
\def\Diff{\text{Diff}}
\def\id{\text{id}}
\def\Flux{\text{Flux}}
\def\Map{\text{Map}}
\makeatletter \@addtoreset {equation}{section}
\renewcommand\theequation
\z@ \arabic{section}.\arabic{subsection}.\arabic{equation}
  \else \arabic{section}.\arabic{equation} \fi}
\begin{document}

\title{Towards the $C^0$ flux conjecture}

\author{Lev Buhovsky$^{1}$}

\footnotetext[1]{The author also uses the spelling ``Buhovski"
for his family name.}

\date{\today}
\maketitle

\begin{abstract}
In this note, we generalise a result of Lalonde, McDuff and Polterovich concerning the $ C^0 $ flux conjecture, thus confirming the conjecture in new cases of a symplectic manifold. Also, we prove the continuity of the flux homomorphism on the space of smooth symplectic isotopies endowed with the $ C^0 $ topology, which implies the $ C^0 $ rigidity of Hamiltonian paths, conjectured by Seyfaddini.
\end{abstract}

\noindent

\section{Introduction and main results}

The celebrated Eliashberg-Gromov rigidity theorem~\cite{E1,E2,G} states that on any closed symplectic manifold $ (M,\omega) $, the group $ \Symp(M,\omega) $ of symplectomorphisms of $ M $ is $C^0$-closed inside the group $ \Diff(M) $ of diffeomorphisms of $ M $. A related natural conjecture (called the $ C^0 $ flux conjecture) was raised in Banyaga's foundational paper~\cite{B}: is the group $ \Ham(M,\omega) $ of Hamiltonian diffeomorphisms of $ M $ $ C^0 $-closed inside $ Symp_0(M,\omega) $, the connected component of identity in $ \Symp(M,\omega) $?

The reader may wonder why it is asked if $ \Ham(M,\omega) $ is $ C^0 $-closed in $ \Symp_0(M,\omega) $ rather than $ \Symp(M,\omega) $. The difficulty in addressing the latter question is that, although the Eliashberg-Gromov rigidity theorem tells us that $ \Symp(M,\omega) $ is $ C^0 $-closed in the group $ \Diff(M) $ of diffeomorphisms of $ M $, it is not known if $ \Symp_0(M,\omega) $ is $ C^0 $-closed in $ \Symp(M,\omega) $. To avoid this difficulty the $ C^0 $ flux conjecture is usually stated for $ \Symp_0(M,\omega) $.

A weak form of the $ C^0 $ flux conjecture is the $ C^1 $ flux conjecture, which states that $ \Ham(M,\omega) $ is $ C^1 $-closed in $ \Symp_0(M,\omega) $. This statement is equivalent to that the flux group $ \Gamma \subset H^1(M,\mathbb{R}) $ is discrete. Some cases of the $ C^1 $ flux conjecture were proven in~\cite{B}, \cite{L-M-P}, \cite{M}; it was finally confirmed in full generality by Ono~\cite{O-1}. However, the $ C^0 $ flux conjecture still remains open in case of a general symplectic manifold. It has been confirmed by Lalonde, McDuff and Polterovich in certain cases~\cite{L-M-P} (these cases are described below). Also, Humili\`ere and Vichery established more cases of the $ C^0 $ flux conjecture in their joint work~\cite{Hu-V}. 

A different weak form of the $ C^0 $ flux conjecture (the ``$ C^0 $ rigidity of Hamiltonian paths") was proposed by Seyfaddini~\cite{S}: Is it true that on any closed symplectic manifold, the space of smooth Hamiltonian isotopies is $ C^0 $ close in the space of smooth symplectic isotopies? In~\cite{S}, Seyfaddini showed that a symplectic isotopy which is a $ C^0 $ limit of a sequence of Hamiltonian isotopies is itself Hamiltonian, provided that the corresponding sequence of generating Hamiltonians is a Cauchy sequence in the $ L^{(1,\infty)} $ topology. 

The results of this note are concerned with the $ C^0 $ flux conjecture, and with the mentioned conjecture of Seyfaddini (the $ C^0 $ rigidity of Hamiltonian paths).

\subsection{The $ C^0 $ flux conjecture}

We denote $ H = \Ham(M,\omega) \subseteq G = \Symp_0(M,\omega) $, by $ \widetilde{G} = \widetilde{\Symp}_0 (M,\omega) $ we denote the universal cover of $ G = \Symp_0(M,\omega) $, and by $ \widetilde{H} \subseteq \widetilde{G} $ we denote those elements of $ \widetilde{G} $ whose endpoint belongs to $ H $. Next, by $ H_0 \subseteq G $ we denote the $ C^0 $ closure of $ H $ inside $ G $, and by $ \widetilde{H}_0 \subseteq \widetilde{G} $ we denote the lift of $ H_0 $ to $ \widetilde{G} $. Also, we use the notation $ \Map_0(M) $ for the connected component of the identity in the space of all smooth maps $ M \rightarrow M $.

Denote by $ \Gamma \subset H^1(M,\mathbb{R}) $ the flux group, i.e. the image of $ \widetilde{H} $ (or, equivalently, of $ \pi_1(G) $) under the flux homomorphism, and by $ \Gamma_0 \subset H^1(M,\mathbb{R}) $ the image of $ \widetilde{H}_0 $ under the flux homomorphism. It is not hard to see that the $ C^0 $ flux conjecture is equivalent to the equality $ \Gamma_0 = \Gamma $ (this follows from the well-known fact that for a path $ \phi^t $, $ t \in [0,1] $ of symplectic diffeomorphisms, its endpoint $ \phi^1 $ belongs to $ H $ if and only if its flux belongs to $ \Gamma $). The restriction of the flux homomorphism to $ \pi_1(G) $ admits a natural extension to a homomorphism (which we again call flux homomorphism) from $ \pi_1(\Map_0(M)) $ to the $ H^1(M,\mathbb{R}) $. 
Following~\cite{L-M-P}, we denote by $ \Gamma_{top} $ the image of $ \pi_1(\Map_0(M)) $ under the flux homomorphism. Consider the evaluation homomorphism $ ev : \pi_1(\Map_0(M)) \rightarrow \pi_1(M) $. For any $ a \in \pi_1(M) $ we denote by $ \Gamma^a_{top} \subseteq \Gamma $ the image of $ ev^{-1}(a) \subseteq \pi_1(\Map_0(M)) $ under the flux homomorphism. 

The following result was proved in~\cite{L-M-P}:

\begin{theorem} \label{T:L-M-P}
If $ M $ is Lefschetz, then $ \Gamma_0 \subseteq \Gamma_{top} $. 
\end{theorem}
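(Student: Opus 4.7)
The overall plan is to reduce any class $ c \in \Gamma_0 $ to one arising as the flux of a loop in $ \Map_0(M) $, by exploiting the Hamiltonian $ C^0 $-approximation hypothesis to modify the representing symplectic isotopy so its endpoint becomes $ C^0 $-close to $ \id $, and then closing up that modified isotopy to an honest loop in the larger space $ \Map_0(M) $.

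Concretely, pick $ c = \Flux(\widetilde{\phi}) \in \Gamma_0 $ with $ \widetilde{\phi} \in \widetilde{H}_0 $, and let $ \phi^t $ be a representing symplectic isotopy whose endpoint $ \phi^1 \in H_0 $ is a $ C^0 $-limit of Hamiltonian diffeomorphisms $ \phi_k $. Lift the $ \phi_k $ to Hamiltonian isotopies $ \phi_k^t $, and set
\[
\psi_k^t := \phi^t \circ (\phi_k^t)^{-1}.
\]
Since the flux homomorphism on $ \widetilde{\Symp}_0(M,\omega) $ is a group homomorphism vanishing on Hamiltonian isotopies, we have $ \Flux(\{\psi_k^t\}) = c $ for every $ k $, while $ \psi_k^1 = \phi^1 \circ \phi_k^{-1} \to \id $ uniformly. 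For large $ k $, I would then construct a smooth path $ \sigma_k : [0,1] \to \Map_0(M) $ from $ \psi_k^1 $ back to $ \id $ that stays inside a small $ C^0 $-neighborhood of $ \id $ — say by geodesic straight-line interpolation for a fixed Riemannian metric on $ M $. The concatenation $ g_k := \{\psi_k^t\} \ast \sigma_k $ is then a smooth loop in $ \Map_0(M) $ based at $ \id $, so by definition $ \Flux(g_k) \in \Gamma_{top} $, and a direct computation gives
\[
\Flux(g_k) = c + \Flux(\sigma_k).
\]
The correction $ \Flux(\sigma_k) $ is computed, on any $ 1 $-cycle $ \gamma \subset M $, by integrating $ \omega $ over the cylinder swept by $ \sigma_k^s(\gamma) $; since these cylinders uniformly shrink to constant loops as $ \psi_k^1 \to \id $, one obtains $ \Flux(\sigma_k) \to 0 $ in $ H^1(M,\mathbb{R}) $.

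The main obstacle is deducing $ c \in \Gamma_{top} $ from the pair of conditions $ c + \Flux(\sigma_k) \in \Gamma_{top} $ and $ \Flux(\sigma_k) \to 0 $, and this is where the Lefschetz hypothesis is essential. I would argue it via the discreteness of $ \Gamma_{top} $ in $ H^1(M,\mathbb{R}) $ under the Lefschetz condition: the cup product $ \smile [\omega]^{n-1} : H^1(M,\mathbb{R}) \to H^{2n-1}(M,\mathbb{R}) $ is an isomorphism, and, combined with Poincar\'e duality, this identifies the flux pairings with $ \omega^n $-volume integrals over evaluation tori in $ M $, whose relevant homology classes are captured by a finitely generated subgroup of $ H_2(M,\mathbb{Z}) $. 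Discreteness then forces the sequence $ \Flux(g_k) \to c $ in $ \Gamma_{top} $ to be eventually constant, and so $ c \in \Gamma_{top} $. The heart of the proof lies in this discreteness statement for $ \Gamma_{top} $ in the Lefschetz case, the analogue for $ \pi_1(\Map_0(M)) $ of Ono's discreteness result for $ \pi_1(G) $; by contrast, the $ C^0 $-smallness of $ \Flux(\sigma_k) $ is a relatively soft, metric-geometric fact that does not use the Lefschetz structure.
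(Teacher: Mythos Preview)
First, note that the paper does not give its own proof of this statement: Theorem~\ref{T:L-M-P} is quoted as a result of~\cite{L-M-P}, and the paper's contribution is the strengthening in Theorem~\ref{T:Main-result}. So there is no in-paper proof to compare against directly.

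Regarding your proposal itself, there is a genuine gap, and it lies exactly where you claim the argument is ``relatively soft''. The assertion that $\Flux(\sigma_k)\to 0$ because ``these cylinders uniformly shrink to constant loops'' is unjustified. The geodesic cylinders $u_k$ do converge in $C^0$ to the degenerate cylinder over $\gamma$, but symplectic area is not $C^0$-continuous: a cylinder whose image sits in an arbitrarily thin tube around $\gamma$ can still carry $\omega$-area bounded away from zero. Concretely, $\psi_k^1=\phi^1\circ\phi_k^{-1}$ is only $C^0$-close to $\id_M$, with no control on its derivative, so the naive bound $|\omega(u_k)|\leqslant C\,\|\partial_t u_k\|_\infty\,\|\partial_s u_k\|_{L^1}$ yields nothing as $k\to\infty$.

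This is not a minor technicality. Establishing that $\omega(u_k)\to 0$ in exactly this situation is the main technical content of the present paper (Proposition~\ref{P:main-step-proof-main-result}), and its proof requires hard symplectic rigidity---either Chekanov's displacement-energy inequality or Gromov's theorem on exact Lagrangians in cotangent bundles. You have inverted the difficulty: the step you dismiss as metric-geometric is the deep one, while the discreteness of $\Gamma_{top}$ under the Lefschetz hypothesis is the comparatively elementary algebraic-topological input (and indeed predates, and is much softer than, Ono's result for $\Gamma$).

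If one grants Proposition~\ref{P:main-step-proof-main-result}, your outline does go through: one obtains $c\in\overline{\Gamma_{top}}$, and discreteness of $\Gamma_{top}$ in the Lefschetz case closes the argument. But this amounts to deducing Theorem~\ref{T:L-M-P} as a corollary of the paper's stronger Theorem~\ref{T:Main-result}, not to giving an independent proof. The original argument in~\cite{L-M-P} instead uses the Lefschetz isomorphism $\smile[\omega]^{n-1}$ and Poincar\'e duality to replace the $\omega$-area of the cylinder by an $\omega^n$-volume-type quantity, which \emph{is} controlled by $C^0$-data; that is how the Lefschetz hypothesis genuinely enters, rather than merely as a discreteness statement bolted on at the end.
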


As a consequence, Lalonde, McDuff and Polterovich conclude:
 
\begin{corollary} \label{C:L-M-P}
Assume that $ M $ is Lefschetz and that $ \Gamma_{top} = \Gamma $. Then the $ C^0 $ flux conjecture holds for $ M $. 
\end{corollary}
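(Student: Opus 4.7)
The plan is to combine Theorem~\ref{T:L-M-P} with the reformulation of the $C^0$ flux conjecture mentioned just after the definition of $\Gamma_0$: the conjecture is equivalent to the equality $\Gamma_0=\Gamma$. So I only need to establish the two inclusions.

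First, I would observe that $\Gamma \subseteq \Gamma_0$ is automatic: every Hamiltonian isotopy is in particular a $ C^0 $-limit of Hamiltonian isotopies (namely, itself), so $\widetilde{H} \subseteq \widetilde{H}_0$, and taking images under the flux homomorphism gives $\Gamma \subseteq \Gamma_0$. For the reverse inclusion, I would invoke the hypothesis that $M$ is Lefschetz, which by Theorem~\ref{T:L-M-P} yields $\Gamma_0 \subseteq \Gamma_{top}$. Then the assumption $\Gamma_{top}=\Gamma$ forces $\Gamma_0 \subseteq \Gamma$, so the two subgroups agree and the $C^0$ flux conjecture holds.

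There is essentially no obstacle to overcome here: once Theorem~\ref{T:L-M-P} and the reformulation $\Gamma_0=\Gamma \Longleftrightarrow (C^0\text{ flux})$ are in hand, the corollary is a one-line chase of inclusions. All the genuine work is encapsulated in Theorem~\ref{T:L-M-P} (the Lefschetz step that relates the $C^0$-closure-flux to the topological flux $\Gamma_{top}$) and in the standard fact recalled in the introduction that an endpoint of a symplectic isotopy is Hamiltonian precisely when its flux lies in $\Gamma$.
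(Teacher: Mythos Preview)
Your proposal is correct and is exactly the deduction the paper has in mind: the corollary is stated there simply ``as a consequence'' of Theorem~\ref{T:L-M-P}, and the implicit argument is the inclusion chase $\Gamma \subseteq \Gamma_0 \subseteq \Gamma_{top} = \Gamma$, together with the reformulation $\Gamma_0=\Gamma \Leftrightarrow (C^0$ flux conjecture$)$. One tiny wording fix: the inclusion $\widetilde{H}\subseteq\widetilde{H}_0$ follows from $H\subseteq H_0$ (since $H_0$ is the $C^0$ closure of $H$), which is about Hamiltonian \emph{diffeomorphisms} rather than isotopies, but the conclusion is unaffected.
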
  

As an example, one can take $ M $ to be a closed K\"ahler manifold of nonpositive curvature such that its fundamental group has a trivial center. As another example, one can take the $2n$-dimensional torus with a translation invariant symplectic structure. See~\cite{L-M-P} for more details.

Now we turn to our results. Our main result is:

\begin{theorem} \label{T:Main-result}
Let $ (M,\omega) $ be a closed symplectic manifold. Then $ \Gamma_0 \subseteq \Gamma_{top} + \overline{\Gamma^e_{top}} $, where $ e \in \pi_1(M) $ is the identity, and $ \overline{\Gamma^e_{top}} \subseteq H^1(M,\mathbb{R}) $ is the closure of $ \Gamma^e_{top} $ inside $ H^1(M,\mathbb{R}) $.
\end{theorem}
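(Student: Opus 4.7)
The plan is to approximate $\tilde\phi \in \widetilde{H}_0$ by a sequence of loops in $\Map_0(M)$ whose fluxes tend to $f$ and whose evaluation classes in $\pi_1(M)$ eventually stabilize. Concretely, let $\tilde\phi$ have $\Flux(\tilde\phi) = f$ and endpoint $\phi^1 \in H_0$, and pick $\psi_n^1 \in H$ with $\psi_n^1 \to \phi^1$ in $C^0$. Choose Hamiltonian paths $\psi_n$ from $\id$ to $\psi_n^1$, lift them to $\tilde\psi_n \in \widetilde{H}$ with $\Flux(\tilde\psi_n) = 0$, and form the product $\tilde\chi_n := \tilde\phi \cdot \tilde\psi_n^{-1} \in \widetilde{G}$, whose endpoint $h_n := \phi^1 \circ (\psi_n^1)^{-1}$ tends to $\id$ in $C^0$ while its flux is still $f$. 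Close the path $\chi_n$ up to a loop by concatenating with a smooth, $C^0$-small path $\sigma_n$ in $\Map_0(M)$ from $h_n$ back to $\id$ (for example, $\sigma_n^t(x) = \exp_x((1-t)\, v_n(x))$ with $v_n(x) = \exp_x^{-1}(h_n(x))$ for an auxiliary Riemannian metric), and set $L_n := \chi_n * \sigma_n$.

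The next step is to show that $\Flux(L_n) \to f$ in $H^1(M,\mathbb{R})$. For a 1-cycle $\gamma$, the pairing $\langle \Flux(L_n), [\gamma] \rangle$ splits as the flux integral over $\chi_n$ applied to $\gamma$, which yields $\langle f, [\gamma] \rangle$, plus an integral of $\omega$ over the cylinder swept by $\sigma_n$ on $\gamma$. The latter is controlled by the $C^0$-size of $\sigma_n$, and the $C^0$-continuity of the flux homomorphism on smooth symplectic isotopies (the other main result of this paper) handles the subtlety that $\sigma_n$ is $C^0$-small but need not be $C^1$-small. Since each $L_n$ is a loop in $\Map_0(M)$, we have $\Flux(L_n) \in \Gamma_{top}$, and thus $f$ is already obtained in $\overline{\Gamma_{top}}$.

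The heart of the argument is to arrange that the evaluation class $ev([L_n]) \in \pi_1(M, x_0)$ is eventually constant in $n$. The evaluation loop $L_n(x_0)$ in $M$ decomposes into the path $\psi_n(x_0)$ from $x_0$ to $\psi_n^1(x_0)$, then a short path from $\psi_n^1(x_0)$ to $h_n(x_0)$ coming from the $\chi_n$-segment composed with $\phi^1$ and from $\sigma_n$ (whose homotopy class, for large $n$, is determined by its endpoints in a small ball), and finally the reverse of $\phi(x_0)$. The free parameters are the homotopy class of the Hamiltonian path $\psi_n$ and the exact choice of $\sigma_n$; modifying $\psi_n$ by a Hamiltonian loop shifts $[\psi_n(x_0)]$ by an element of $ev(\pi_1(H))$. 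The main obstacle is to make these choices coherent enough in $n$ so that $ev([L_n])$ eventually stabilizes; this uses a local section argument for the evaluation map over a neighborhood of $\phi^1$, together with discreteness of $\pi_1(M)$, to promote an a priori discrete invariant to a constant one.

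Once eventual constancy of $ev([L_n])$ is secured, fix a large $n_0$. For every $n \geq n_0$, the loop $L_n \cdot L_{n_0}^{-1}$ has trivial evaluation class, so $\Flux(L_n) - \Flux(L_{n_0}) \in \Gamma^e_{top}$. Letting $n \to \infty$ and using the flux convergence above gives $f - \Flux(L_{n_0}) \in \overline{\Gamma^e_{top}}$. Therefore $f = \Flux(L_{n_0}) + (f - \Flux(L_{n_0}))$ belongs to $\Gamma_{top} + \overline{\Gamma^e_{top}}$, which is the desired conclusion.
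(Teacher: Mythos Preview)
Your overall architecture matches the paper's: close $\tilde\phi$ up to a loop in $\Map_0(M)$ using a nearby Hamiltonian and a short geodesic bridge, then control both the flux and the evaluation class of that loop. The paper packages this as Lemma~2.1 (well-definedness of $ev'$) and Proposition~2.2 (flux approximation), and the deduction $\overline{\Gamma^a_{top}} = \Gamma^a_{top} + \overline{\Gamma^e_{top}} \subseteq \Gamma_{top} + \overline{\Gamma^e_{top}}$ is then immediate.

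However, both of your key steps have genuine gaps. For the flux convergence, you need that the $\omega$-area of the geodesic cylinder between $\gamma$ and $h_n\circ\gamma$ is small when the symplectomorphism $h_n$ is $C^0$-close to $\id_M$. You invoke Theorem~\ref{T:flux-lipschitz}, but that result bounds the flux of a \emph{symplectic} isotopy that stays $C^0$-small throughout; your bridge $\sigma_n$ is not symplectic, and you have no $C^0$-small symplectic isotopy from $\id_M$ to $h_n$ to substitute for it (the only symplectic path to $h_n$ in sight is $\chi_n$, which is not $C^0$-small). The existence of such an isotopy is not known in general, so the black-box appeal to Theorem~\ref{T:flux-lipschitz} does not go through. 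This area bound is precisely the hard content of the paper's Proposition~2.2, proved there directly via Lagrangian intersection theory (Chekanov's displacement-energy inequality, or Gromov's theorem that a closed exact Lagrangian in $T^*\mathbb{T}^n$ meets the zero-section). Without an argument of that type, a $C^0$-small symplectomorphism could in principle produce a cylinder of area bounded away from zero, and the convergence $\Flux(L_n)\to f$ is unproved.

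For the stabilization of $ev([L_n])$, the ``local section plus discreteness'' sketch is too vague and, when made precise, runs into the same obstacle the paper handles in Lemma~2.1: comparing $L_m$ and $L_n$ for two nearby Hamiltonians $\psi_m^1,\psi_n^1$ amounts to showing that a certain Hamiltonian loop (essentially $\psi_m$ then $\psi_n^{-1}$, closed up by short geodesics) has trivial evaluation class. The paper obtains this from the Arnold conjecture --- one needs a fixed point of $(\psi_m^1)^{-1}\psi_n^1$ whose orbit under the isotopy is contractible. Your proposal does not supply any substitute for this input. Note that without stabilization you only get $f\in\overline{\Gamma_{top}}$, which is in general strictly weaker than $f\in\Gamma_{top}+\overline{\Gamma^e_{top}}$ (the closure of a union of cosets of $\Gamma^e_{top}$ can be larger than the union of their closures).
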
 

\noindent As a result, we obtain the following corollary:

\begin{corollary} \label{C:main}
Let $ (M,\omega) $ be a closed symplectic manifold such that $ \Gamma_{top} = \Gamma $. Then the $ C^0 $ flux conjecture holds for $ M $.
\end{corollary}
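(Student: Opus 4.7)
The plan is to show that Corollary~\ref{C:main} follows almost immediately by combining Theorem~\ref{T:Main-result} with Ono's resolution of the $C^1$ flux conjecture, together with the observation (noted by the author) that the $C^0$ flux conjecture is equivalent to the equality $\Gamma_0 = \Gamma$. The one nontrivial input that we must invoke is the discreteness of $\Gamma$ in $H^1(M,\mathbb{R})$, which is Ono's theorem; the rest is essentially formal.

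First, I would record the obvious inclusions: from the definition, $\Gamma^e_{top}$ is a subgroup of $\Gamma_{top}$, and trivially $\Gamma \subseteq \Gamma_0$ since $H \subseteq H_0$. Under the hypothesis $\Gamma_{top} = \Gamma$ and Ono's theorem, $\Gamma_{top}$ is a discrete subgroup of $H^1(M,\mathbb{R})$. Any subgroup of a discrete subgroup of a topological vector space is itself discrete (in the subspace topology), hence closed in $H^1(M,\mathbb{R})$. Applied to $\Gamma^e_{top}\subseteq\Gamma_{top}$, this gives $\overline{\Gamma^e_{top}} = \Gamma^e_{top}$.

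Next, I would apply Theorem~\ref{T:Main-result} and chain these facts:
\begin{equation*}
\Gamma_0 \,\subseteq\, \Gamma_{top} + \overline{\Gamma^e_{top}} \,=\, \Gamma_{top} + \Gamma^e_{top} \,=\, \Gamma_{top} \,=\, \Gamma,
\end{equation*}
where the second-to-last equality uses that $\Gamma^e_{top}$ is a subgroup of $\Gamma_{top}$. Combined with $\Gamma \subseteq \Gamma_0$, this yields $\Gamma_0 = \Gamma$, which is precisely the $C^0$ flux conjecture for $M$.

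There is no genuine obstacle here: the entire content of the corollary is packed into Theorem~\ref{T:Main-result} (which we are taking for granted) and into Ono's discreteness theorem. The only thing to be careful about is that the closure operation $\overline{\Gamma^e_{top}}$ in the statement of Theorem~\ref{T:Main-result} is taken inside $H^1(M,\mathbb{R})$, so I must verify discreteness in $H^1(M,\mathbb{R})$ itself rather than just in $\Gamma_{top}$; this is immediate because a discrete subgroup of $H^1(M,\mathbb{R})$ inherits the discrete topology on its subgroups.
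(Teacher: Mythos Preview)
Your argument is correct and follows essentially the same route as the paper: both combine Theorem~\ref{T:Main-result} with Ono's theorem to collapse $\Gamma_{top}+\overline{\Gamma^e_{top}}$ back to $\Gamma$ and then conclude $\Gamma_0=\Gamma$. The only cosmetic difference is that the paper phrases Ono's input as ``$\Gamma$ is closed'' and directly absorbs $\overline{\Gamma^e_{top}}$ into $\Gamma$, whereas you pass through discreteness of $\Gamma^e_{top}$ to get $\overline{\Gamma^e_{top}}=\Gamma^e_{top}$; these are equivalent formulations of the same step.
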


Indeed, if $ \Gamma_{top} = \Gamma $, then since $ \Gamma $ is closed (the closeness of $ \Gamma $ is exactly the statement of the $ C^1 $ flux conjecture proven in~\cite{O-1}), it follows that $ \Gamma_{top} + \overline{\Gamma^e_{top}} = \Gamma $ and hence $ \Gamma_0 = \Gamma $ by Theorem~\ref{T:Main-result}. 

In particular, the $ C^0 $ flux conjecture holds for a closed symplectically aspherical symplectic manifold $ (M,\omega) $ such that the fundamental group $ \pi_1(M) $ has a trivial center. Indeed, in this case, since the center of $ \pi_1(M) $ is trivial, we get $ ev( \pi_1(\Map_0(M))) = \{ e \} $ and so $ \Gamma_{top} = \Gamma^e_{top} $, and moreover, since $ M $ is symplectically aspherical, we conclude that $ \Gamma^e_{top} = \{ 0 \} $. Therefore $ \Gamma_{top} = 0 $ and hence $ \Gamma_{top} = \Gamma = \{ 0 \} $.

As another example, we get that the $ C^0 $ flux conjecture holds for any product $ (M,\omega) = (\mathbb{T}^{2k} \times N, \sigma \oplus \tau) $, where $ (\mathbb{T}^{2k},\sigma) $ is a symplectic torus with a translation invariant $ \sigma $, and $ (N,\tau) $ is a closed symplectically atoroidal symplectic manifold. Indeed, since $ \mathbb{T}^{2k} $ is symplectically aspherical, and $ N $ is symplectically atoroidal, it follows that for any $ a = (b,c)  \in \pi_1(M) \cong \pi_1(\mathbb{T}^{2k}) \times \pi_1(N) $ and any $ \mathfrak{a} \in \pi_1(\Map_0(M)) $ with $ ev(\mathfrak{a}) = a $, the flux of $ \mathfrak{a} $ is uniquely determined by $ b $. Moreover, since translations of the torus $ \mathbb{T}^{2k} $ generate a large enough subgroup of symplectomorphisms of $ (\mathbb{T}^{2k},\sigma) $, for any $ b \in \pi_1(\mathbb{T}^{2k}) $ we can find an element $ \mathfrak{b} \in \pi_1(G) = \pi_1(\Symp_0(M,\omega)) $ such that $ ev(\mathfrak{b}) = (b,0) $. Therefore we conclude $ \Gamma_{top} = \Gamma $.

\begin{remark}
The reader may ask if there exist examples of closed symplectic manifolds for which $ \Gamma_{top} \neq \Gamma $. The following construction is due to Seidel~\cite{Sei-1,Sei-2}. Let $ (N,\omega_N) $ be a closed symplectic manifold with $ H^1(N,\mathbb{R}) = 0 $, let $ \psi : N \rightarrow N $ be a symplectic diffeomorphism which is smoothly isotopic to the identity, but which is not isotopic to the identity via a smooth path of symplectic diffeomorphisms. Look at the symplectic mapping torus $ E = E_{\psi} $ of $ \psi $, which is the total space of the fibration over the two-torus with fibre $ N $ and monodromy $ \psi $ in one direction, or explicitly, $$ E = \mathbb{R}^2 \times N / (p,q,x) \sim (p-1,q,x) \sim (p,q-1,\psi(x)), $$ $$ \omega_{E} = dp \wedge dq + \omega_N .$$ Because $ \psi $ is smoothly isotopic to the identity, the fibration $ E \rightarrow \mathbb{T}^2 $ is trivial as a smooth one, and it is easy to see that for $ E $ we have $ \Gamma_{top} = H^1(E,\mathbb{Z}) $. However, it is possible that for $ E $ we have $ \Gamma \neq H^1(E,\mathbb{Z}) $. The closed 1-form $ dp $ on $ E $ generates the symplectic vector field $ \frac{\partial}{\partial q} $, whose time-1 map is  $ \phi(p,q,x) = (p,q+1,x) = (p,q,\psi(x)) $. If $ \phi $ turns out to be a non-Hamiltonian diffeomorphism, then we get that $ [dp] \notin \Gamma $, so in particular $ \Gamma \neq \Gamma_{top} $. One way of detecting this is by looking at the Floer cohomology $ HF^*(\psi) $. That is, if we are in a situation when $ HF^*(\psi) $ has total rank different from that of $ H^*(N) $, then $ HF^*(\phi) \cong H^*(\mathbb{T}^2) \otimes HF^*(\psi) $ is not isomorphic to $ H^*(E) $, and hence in particular $ \phi $ is a non-Hamiltonian diffeomorphism (here we consider cohomologies with coefficients in the corresponding Novikov ring). 
\end{remark}

\subsection{The $ C^0 $ rigidity of Hamiltonian paths}

Our next result is: 

\begin{theorem} \label{T:flux-lipschitz}
Let $ (M,\omega) $ be a closed symplectic manifold. Fix a Riemannian metric $ g $ on $ M $, which induces a distance function $ d : M \times M \rightarrow \mathbb{R} $, which in turn, induces a distance $ d $ between maps $ M \rightarrow M $: for any $ f,h : M \rightarrow M $ we set $ d(f,h) = \sup_{x \in M} d(f(x),h(x)) $. Fix a norm $ | \cdot | $ on $ H^1(M,\mathbb{R}) $. Then there exist constants $ c = c(M,\omega,g), C = C(M,\omega,g, | \cdot |) $, such that for any path $ \phi^t $, $ t \in [0,1] $ of symplectomorphisms of $ M $, $ \phi^0 = \id_M $, $ \phi^1 = \phi $, with $ \max_{t \in [0,1]} d(\id_M,\phi^t) < c $, we have $ | \Flux(\{ \phi^t \}) | \leqslant C d(\id_M,\phi) $. 
\end{theorem}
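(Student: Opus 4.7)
The plan is to invoke Weinstein's Lagrangian tubular neighborhood theorem to parametrize symplectomorphisms $C^0$-close to the identity by small closed 1-forms, and then to identify the flux with the cohomology class of the associated 1-form. Fix a symplectic diffeomorphism $\Psi\colon\mathcal{U}\to\mathcal{V}$ from a neighborhood $\mathcal{U}\subset(M\times M,\omega\oplus(-\omega))$ of the diagonal $\Delta_M$ to a neighborhood $\mathcal{V}\subset T^*M$ of the zero section, chosen so that $\Psi(\Delta_M)=0_M$ and so that $\Psi$ covers the first-factor projection $M\times M\to M$. For $c$ sufficiently small, every smooth symplectomorphism $\phi$ with $d(\id_M,\phi)<c$ has $\Gamma_\phi\subset\mathcal{U}$, and $\Psi(\Gamma_\phi)$ is the graph of a unique smooth closed 1-form $\alpha_\phi$ on $M$ (closedness follows from the Lagrangian condition). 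Since $\alpha_\phi(x)$ depends smoothly on the pair $(x,\phi(x))$ with $\alpha_\phi(x)=0$ when $\phi(x)=x$, one gets the pointwise bound $|\alpha_\phi(x)|_g\leqslant C_1\,d(x,\phi(x))$, whence $\|\alpha_\phi\|_{C^0}\leqslant C_1\,d(\id_M,\phi)$ for a constant $C_1=C_1(M,\omega,g)$.

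The main step is to verify $\Flux(\{\phi^t\})=[\alpha_{\phi^1}]$ whenever $\max_t d(\id_M,\phi^t)<c$. Writing $\alpha^t:=\alpha_{\phi^t}$, it suffices to prove the cohomological identity $\tfrac{d}{dt}[\alpha^t]=[i_{X_t}\omega]$ at each $t$ (where $X_t$ is the time-$t$ generator of $\phi^t$), as integration from $0$ to $1$ then yields $[\alpha_{\phi^1}]=\int_0^1[i_{X_t}\omega]\,dt=\Flux(\{\phi^t\})$. Pair against a cycle $[\gamma]\in H_1(M,\mathbb{Z})$ represented by a smooth loop $\gamma$: on a tubular neighborhood $T(\gamma)$ the cohomology $H^2$ vanishes so $\omega|_{T(\gamma)}=d\lambda$ for a primitive $\lambda$, and (after verifying that on $T(\gamma)$ the Weinstein 1-form $\alpha_\phi$ is cohomologous to $\phi^*\lambda-\lambda$) one has $\int_\gamma\alpha^t=\int_\gamma((\phi^t)^*\lambda-\lambda)$. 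Differentiating in $t$,
\[
\tfrac{d}{dt}\int_\gamma\alpha^t=\int_{\phi^t(\gamma)}\mathcal{L}_{X_t}\lambda=\int_{\phi^t(\gamma)}i_{X_t}\omega=\int_\gamma i_{X_t}\omega,
\]
using change of variables and $\tfrac{d}{dt}(\phi^t)^*\lambda=(\phi^t)^*\mathcal{L}_{X_t}\lambda$ for the first equality, Cartan's magic formula $\mathcal{L}_{X_t}\lambda=i_{X_t}\omega+d(i_{X_t}\lambda)$ together with $\int d(\cdot)=0$ over loops for the second, and homological invariance of integrals of closed 1-forms for the third. Since this holds for every smooth loop $\gamma$, the cohomology classes agree.

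The theorem then follows by combining the Lipschitz bound on $\alpha_\phi$ with the elementary estimate $|[\alpha]|\leqslant C_2\|\alpha\|_{C^0}$ valid for any closed 1-form on $M$ (obtained by pairing with a fixed basis of $H_1(M,\mathbb{Z})$ realized by smooth loops of bounded length), giving $|\Flux(\{\phi^t\})|=|[\alpha_{\phi^1}]|\leqslant C_1C_2\,d(\id_M,\phi^1)$. The principal technical obstacle is the verification that on $T(\gamma)$ the Weinstein 1-form $\alpha_\phi$ is cohomologous to $\phi^*\lambda-\lambda$; this requires examining how the global Weinstein identification relates to local primitives of $\omega$, with care needed because $\lambda$ exists only locally and different choices of $\Psi$ may shift $\alpha_\phi$ by an exact form. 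A pleasant feature of the argument is that no $C^1$ control on $\phi^t$ is required, because the bound is obtained by pairing against fixed smooth loops $\gamma$ of bounded length, which is insensitive to pointwise oscillations of the 1-form $\alpha^t$.
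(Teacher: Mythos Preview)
Your argument has a genuine gap at the very first step. A symplectomorphism $\Psi:\mathcal{U}\to\mathcal{V}\subset T^*M$ taking $\Delta_M$ to the zero section \emph{cannot} be chosen to cover the first-factor projection $\pi_1:M\times M\to M$. Indeed, the fibre $\{x\}\times M$ carries the restriction of $\omega\oplus(-\omega)$, which is the nondegenerate form $-\omega$; a symplectic $\Psi$ covering $\pi_1$ would have to send a neighbourhood of $(x,x)$ in this \emph{symplectic} slice into the \emph{Lagrangian} cotangent fibre $T^*_xM$, which is impossible. (A two-line computation already shows this for $M=\mathbb{R}^2$: writing $\Psi(x_1,y_1,x_2,y_2)=(x_1,y_1,p_1,p_2)$, the coefficient of $dx_2\wedge dy_2$ in $\Psi^*\omega_{can}$ is identically $0$, whereas it must equal $-1$.)

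Once this crutch is removed, $\Psi(\Gamma_\phi)$ is only a Lagrangian $C^0$-close to the zero section; it is the graph of a $1$-form precisely when $\pi\circ\Psi\circ(\id,\phi):M\to M$ is a diffeomorphism, and this is a $C^1$ condition on $\phi$, not a $C^0$ one. For instance, on $\mathbb{T}^2$ the composition of two $C^0$-small shears with large frequency produces a symplectomorphism $\phi$ with $d(\id,\phi)$ arbitrarily small but with the Jacobian of $\pi\circ\Psi\circ(\id,\phi)$ vanishing somewhere, so no $\alpha_\phi$ exists. Your argument therefore proves only the (much easier) $C^1$ Lipschitz estimate for the flux. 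The alternative route you sketch via a local primitive $\lambda$ runs into the same wall: the identity $\Flux(\{\phi^t\})(\gamma)=\int_{\phi\circ\gamma}\lambda-\int_\gamma\lambda$ is correct, but bounding the right-hand side by $C\,d(\id,\phi)$ using the ``short'' geodesic cylinder again requires control of $(\phi\circ\gamma)'$, i.e.\ of $d\phi$.

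The paper proceeds along an entirely different line. For each embedded loop $\gamma$ it builds a Lagrangian torus $L=\iota(S^1\times S(\epsilon/3)^{\,n-1})$ in a Weinstein tube around $\gamma$, decomposes the generating vector field of $\phi^t$ (inside the tube) into a pure $p_0$-shift by $\kappa(t)=\Flux(\{\phi^s\}_{s\leqslant t})(\gamma)$ plus a Hamiltonian piece, and observes that the resulting Hamiltonian isotopy $\psi^t=(\phi^t)^{-1}\tilde\phi^t$ keeps $L$ inside $\iota(W(\epsilon)\times B'(\epsilon)^{\,n-1})$. Since $L$ is Hamiltonianly non-displaceable there, $\psi^T(L)\cap L\neq\emptyset$, and comparing with the rigid shift $\tilde\phi^T(L)$ forces $d(\id,\phi^T)\geqslant c_2|\kappa(T)|$. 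The hard input is thus Lagrangian intersection theory (Gromov's theorem on exact Lagrangians in cotangent bundles, or Chekanov's theorem), which is what allows one to pass from $C^1$ to $C^0$ control. Your linearisation via graphs does not access this rigidity.
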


Theorem~\ref{T:flux-lipschitz} has a direct corollary:

\begin{corollary} \label{C:rigidity-ham-paths}
{\bf 1)} On any closed symplectic manifold, the flux homomorphism is continuous with respect to the $ C^0 $ distance between smooth paths of symplectomorphisms. \\
{\bf 2)} $ C^0 $ rigidity of Hamiltonian paths: on any closed symplectic manifold, the space of smooth Hamiltonian isotopies of $ M $ is $ C^0 $-closed in the space of smooth symplectic isotopies of $ M $. This confirms the mentioned above conjecture of Seyfaddini.
\end{corollary}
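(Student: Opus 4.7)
The plan is to deduce both parts of Corollary~\ref{C:rigidity-ham-paths} from Theorem~\ref{T:flux-lipschitz} together with the fact that the flux behaves as a homomorphism under pointwise composition of symplectic isotopies based at $ \id_M $. Recall the elementary identity: if $ \alpha^t $ and $ \beta^t $ are smooth symplectic isotopies starting at $ \id_M $, generated by time-dependent vector fields $ X_t $ and $ Y_t $, then $ \alpha^t \circ \beta^t $ is generated by $ X_t + (\alpha^t)_* Y_t $, so since $ \alpha^t $ is a symplectomorphism isotopic to the identity and hence acts trivially on $ H^1(M,\mathbb{R}) $, we obtain $ \Flux( \{ \alpha^t \circ \beta^t \} ) = \Flux( \{ \alpha^t \} ) + \Flux( \{ \beta^t \} ) $.

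For Part {\bf 1)}, fix a smooth symplectic isotopy $ \phi_0^t $ with $ \phi_0^0 = \id_M $, and let $ \phi_n^t $ be smooth symplectic isotopies with $ \phi_n^0 = \id_M $ such that $ \sup_{t \in [0,1]} d(\phi_n^t,\phi_0^t) \to 0 $. Set $ \beta_n^t = (\phi_0^t)^{-1} \circ \phi_n^t $. Then $ \beta_n^t $ is a smooth symplectic isotopy starting at $ \id_M $, and $ \phi_n^t = \phi_0^t \circ \beta_n^t $, so by the homomorphism identity $ \Flux(\{\phi_n^t\}) - \Flux(\{\phi_0^t\}) = \Flux(\{\beta_n^t\}) $. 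Because $ \phi_0^t $ is a smooth family of diffeomorphisms of the compact manifold $ M $, there exists a uniform Lipschitz constant $ L = L(\phi_0^\bullet,g) $ such that $ d(x,(\phi_0^t)^{-1}(y)) \leqslant L \, d(\phi_0^t(x),y) $ for every $ x,y \in M $ and every $ t \in [0,1] $; specialising to $ y = \phi_n^t(x) $ yields $ \sup_t d(\id_M,\beta_n^t) \leqslant L \cdot \sup_t d(\phi_n^t,\phi_0^t) \to 0 $. Once $ n $ is large enough that this supremum is below the constant $ c $ of Theorem~\ref{T:flux-lipschitz}, the theorem applies to $ \beta_n^t $ and gives $ |\Flux(\{\beta_n^t\})| \leqslant C \, d(\id_M,\beta_n^1) \leqslant CL \, d(\phi_0^1,\phi_n^1) \to 0 $, which is exactly the required continuity.

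For Part {\bf 2)}, let $ \phi^t $ be a smooth symplectic isotopy that is a $ C^0 $-limit of smooth Hamiltonian isotopies $ \phi_n^t $. A smooth symplectic isotopy is Hamiltonian if and only if the flux of each of its initial segments $ \{\phi^s\}_{s \in [0,t]} $ vanishes, so it is enough to show $ \Flux(\{\phi^s\}_{s \in [0,t]}) = 0 $ for every $ t \in [0,1] $. Fix such a $ t $: after rescaling $ s \mapsto s/t $ if desired, the restricted paths $ \{\phi_n^s\}_{s \in [0,t]} $ are Hamiltonian (hence of vanishing flux) and still $ C^0 $-converge to $ \{\phi^s\}_{s \in [0,t]} $, so Part {\bf 1)} forces the flux of the limit to vanish.

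The only step beyond Theorem~\ref{T:flux-lipschitz} is the pointwise-composition homomorphism identity for flux, which is a short explicit calculation and presents no serious obstacle. The decisive conceptual move is the one carried out in Part {\bf 1)}: using the group structure on smooth symplectic isotopies to replace the $ C^0 $-comparison of the two paths $ \phi_0^t $ and $ \phi_n^t $ by the $ C^0 $-comparison of the single path $ \beta_n^t $ with $ \id_M $, since the latter is precisely the regime in which the Lipschitz estimate of Theorem~\ref{T:flux-lipschitz} operates.
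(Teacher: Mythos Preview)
Your argument is correct. The paper itself gives no proof of Corollary~\ref{C:rigidity-ham-paths}, declaring it a ``direct corollary'' of Theorem~\ref{T:flux-lipschitz}; your write-up supplies exactly the natural reduction the paper leaves implicit, namely using the group structure $\beta_n^t=(\phi_0^t)^{-1}\circ\phi_n^t$ together with the additivity of flux to pass to a path that is $C^0$-small, where Theorem~\ref{T:flux-lipschitz} applies directly.
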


Let us remark, that there is another weak version of the $ C^0 $ flux conjecture, which is due to Seyfaddini, and it concerns with the topological (continuous) Hamiltonian dynamics initially introduced by Oh and M\"uller~\cite{Oh-M}: Is it true that any Hamiltonian homeomorphism (in the sense of~\cite{Oh-M}) which belongs to $ \Symp_0(M,\omega) $, is in fact a Hamiltonian diffeomorphism?

\subsection*{Notations}
Let $ A > 0 $. We denote by $ B(A) \subset \mathbb{R}^2 $ the open euclidean disc centered at the origin having area $ A $, i.e. $ B(A) = \{ z \in \mathbb{R}^2 \, | \, \pi |z|^2 < A \} $. We denote $ S(A) = \partial B(A) = \{ z \in \mathbb{R}^2 \, | \, \pi |z|^2 = A \} $, the euclidean circle centered at the origin enclosing a disc of area $ A $. Also, we use the notation $ B'(A) = B(A) \setminus \{ 0 \} \subset \mathbb{R}^2 $ for the punctured disc. On $ T^{*}S^1 $ with canonical coordinates $ (q,p) $, where $ q \in \mathbb{R} / \mathbb{Z} $, $ p \in \mathbb{R} $, and with the standard symplectic form $ dp \wedge dq $, we will use the notation $ S^1 \subset T^* S^1 $ for the zero-section, and we denote $ W(A) = \{ (q,p) \, | \, |p| < A \} \subset T^* S^1 $, so that $ W(A) $ is a neighbourhood of the zero-section in $ T^* S^1 $ having area $ 2A $. 

\subsection*{Acknowledgements}
I thank  Matthew Strom Borman, Yakov Eliashberg, Jonny Evans, Vincent Humuliere, Jarek Kedra, Dusa McDuff, Emmanuel Opshtein, Timothy Perutz, Leonid Polterovich, Paul Seidel, Sobhan Seyfaddini and Mikhail Sodin for useful discussions, comments and suggestions. I thank the organisers of the workshop ``J-holomorphic Curves in Symplectic Geometry, Topology, and Dynamics" which was held on April 29 - May 10, 2013, in Montreal, Canada, for providing an enjoyable and lively research atmosphere.

\section{Proofs}

Consider the evaluation homomorphism $ ev : \pi_1(\Map_0(M)) \rightarrow \pi_1(M) $. In the next lemma we show that its restriction to $ \pi_1(G) $ can be naturally extended to a homomorphism $ ev' : \widetilde{H}_0 \rightarrow \pi_1(M) $:

\begin{lemma} \label{L:homomo}
The homomorphism $ ev |_{\pi_1(G)} : \pi_1(G) \rightarrow \pi_1(M) $ admits a natural extension to a homomorphism $ ev' : \widetilde{H}_0 \rightarrow \pi_1(M) $.
\end{lemma}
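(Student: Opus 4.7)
The plan is to construct $ev'$ explicitly on path representatives and then to verify that the resulting class depends only on the homotopy class. Fix a basepoint $x_0 \in M$ and let $\tilde{\phi} \in \widetilde{H}_0$ be represented by a smooth path $\{\phi^t\}_{t \in [0,1]}$ in $G$ with $\phi^0 = \id_M$ and $\phi^1 \in H_0$. Choose a sequence $\psi_n \in H$ with $\psi_n \to \phi^1$ in $C^0$, and for each $n$ a Hamiltonian isotopy $\{\eta_n^t\}$ from $\id_M$ to $\psi_n$. Form the concatenated path in $G$
\[\gamma_n^t := \begin{cases} \phi^{2t}, & 0 \leq t \leq \tfrac{1}{2}, \\ \eta_n^{2-2t}\,\psi_n^{-1}\,\phi^1, & \tfrac{1}{2} \leq t \leq 1, \end{cases}\]
running from $\id_M$ through $\phi^1$ to $\psi_n^{-1}\phi^1$. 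For $n$ large, $\psi_n^{-1}\phi^1$ is $C^0$-close to $\id_M$, and the evaluation $\gamma_n^t(x_0)$ is a path in $M$ ending in a convex normal neighbourhood of $x_0$. I close this path up by the shortest geodesic back to $x_0$ to form a loop $\ell_n$, and set $ev'(\tilde{\phi}) := [\ell_n] \in \pi_1(M, x_0)$.

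The heart of the argument is well-definedness. Independence of the smooth representative of $\tilde{\phi}$ is immediate, since homotopies rel endpoints in $G$ induce homotopies rel endpoints of the corresponding paths in $M$. Independence of $n$ and of the choices $(\psi_n, \eta_n)$ is verified by comparing two admissible loops $\ell_n, \ell_n'$: the composite $\ell_n * \overline{\ell_n'}$ telescopes, after cancelling the $\phi$-halves, to the evaluation at $x_0$ of a Hamiltonian loop built from $\eta_n$, a bridge in $H$ between $\psi_n$ and $\psi_n'$, and $\overline{\eta_n'}$. For $n$ large, this loop in $G$ is $C^0$-close to the constant loop at $\id_M$, so its evaluation at $x_0$ is a loop in $M$ of small diameter and hence null-homotopic. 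The extension of $ev|_{\pi_1(G)}$ is then immediate: for $\tilde{\phi} \in \pi_1(G)$ one may take $\psi_n = \id_M$ and $\eta_n \equiv \id_M$, so that $\ell_n$ is exactly the loop $\phi^t(x_0)$ that represents $ev(\tilde{\phi})$.

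The homomorphism property follows by choosing compatible approximating data: if $\psi_n \to \phi^1$ and $\chi_n \to \psi^1$ in $C^0$, then $\psi_n \chi_n \to \phi^1 \psi^1$ in $C^0$, and the concatenated Hamiltonian paths lift a representative of the product. A direct loop-composition check in $\pi_1(M, x_0)$, including the change-of-basepoint contribution along $\phi^t(x_0)$, then yields $ev'(\tilde{\phi} \cdot \tilde{\psi}) = ev'(\tilde{\phi}) \cdot ev'(\tilde{\psi})$. The main obstacle I anticipate is the stability step inside the well-definedness argument: one must show that a loop in $G$ which is $C^0$-close to the constant loop at $\id_M$, yet built out of Hamiltonian-path segments with non-canonical choices, has null-homotopic evaluation at $x_0$. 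This is a soft topological statement, but it has to be set up with care in order to control the ambiguity inherent in the choice of Hamiltonian path from $\id_M$ to an element of $H$.
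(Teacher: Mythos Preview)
Your construction of $ev'$ is essentially the same as the paper's: concatenate the given symplectic isotopy with the reverse of a Hamiltonian isotopy whose endpoint approximates $\phi^1$, patch the small gap by geodesics, and read off the homotopy class of the resulting loop. The serious issue is in your well-definedness argument.

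You claim that the comparison loop in $G$, built from $\eta_n$, a short bridge between $\psi_n$ and $\psi_n'$, and $\overline{\eta_n'}$, is ``for $n$ large \ldots\ $C^0$-close to the constant loop at $\id_M$''. This is false. Only the \emph{endpoints} of $\eta_n$ and $\eta_n'$ are constrained (they are $\id_M$ and something $C^0$-close to $\phi^1$); the intermediate values $\eta_n^t$ are completely arbitrary Hamiltonian diffeomorphisms and can sit anywhere in $G$. So the evaluated loop in $M$ need not have small diameter, and your ``soft topological statement'' simply does not hold. The paper's proof confronts exactly this difficulty and resolves it with a hard input: by the Arnold conjecture, the Hamiltonian loop obtained by running $\psi_1^t$ forward and $\psi_2^t$ backward has a fixed point $p$ whose orbit is contractible. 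Since the evaluation map $ev:\pi_1(\Map_0(M))\to\pi_1(M)$ does not depend on the chosen basepoint, contractibility of the orbit of this one point forces the evaluation of the whole comparison loop to be trivial. In short, the step you flag as ``the main obstacle'' is not a matter of careful bookkeeping; it requires the Arnold conjecture (or an equivalent rigidity statement), which your proposal does not invoke.
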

\begin{proof}
Let us first present a construction of $ ev' $. Fix a Riemannian metric $ g $ on $ M $. Let $ \tilde{\phi} \in \widetilde{H} $, let $ \phi^t , t \in [0,1] $ be a path of symplectomorphisms representing $ \tilde{\phi} $, and denote $ \phi = \phi^1 $. Consider a Hamiltonian diffeomorphism $ \psi \in H $ such that $ \psi $ is sufficiently $ C^0 $-close to $ \phi $ (it is possible to find such a Hamiltonian diffeomorphism, since by our assumption $ \phi $ lies in the $ C^0 $ closure of $ H $ inside $ G $). Let $ \psi^t , t \in [0,1] $ be a Hamiltonian isotopy of $ M $, such that $ \psi^1 = \psi $. Define a continuous loop $ f^t $, $ t \in [0,3] $ in $ \Map_0(M) $, such that $ f^t = \phi^{t} $ for $ t \in [0,1] $, such that for any $ x \in M $, the path $ f^t(x) $, $ t \in [1,2] $ is the shortest $ g $-geodesic connecting $ \phi(x) $ and $ \psi (x) $, and such that $ f^t = \psi^{3-t} $ for $ t \in [2,3] $. We now define $ ev'(\phi) $ to be the value of the evaluation map $ ev $ at the loop $ f^t $, $ t \in [0,3] $.

Let us show that the definition does not depend on the choice of $ \psi $ and of the path $ \psi^t $, $ t \in [0,1] $. Let $ \psi^t_1 $, $ \psi^t_2 $, $ t \in [0,1] $ be two Hamiltonian isotopies of $ M $, such that $ \psi_1 = \psi_1^1 $ and $ \psi_2 = \psi^1_2 $ are sufficiently $ C^0 $-close to $ \phi $. Define the corresponding loops $ f_1^t $, $ f_2^t $, $ t \in [0,3] $ as above. Define the loop $ h_1^t $, $ t \in [0,6] $ in $ \Map_0(M) $ by $ h_1^t = f_1^{3-t} $, $ t \in [0,3] $ and $ h_1^t = f_2^{t-3} $, $ t \in [3,6] $. It is enough to show that the value of $ ev $ at the loop $ h_1^t $, $ t \in [0,6] $ equals $ e \in \pi_1(M) $. Clearly, the loop $ h_1^t $, $ t \in [0,6] $ is homotopic to the loop $ h_2^t $, $ t \in [0,4] $, where $ h_2^t = \psi_1^t $ for $ t \in [0,1] $, where for any $ x \in M $ the path $ h_2^t(x) $, $ t \in [1,2] $ is the shortest $ g $-geodesic between $ \psi_1(x) $ and $ \phi(x) $, and the path $ h_2^t(x) $, $ t \in [2,3] $ is the shortest $ g $-geodesic between $ \phi(x) $ and $ \psi_2(x) $, and finally $ h_2^t = \psi_2^{4-t} $ for $ t \in [3,4] $. Also, since $ \psi_1 $ and $ \psi_2 $ are $ C^0 $ close to $ \phi $, it follows that the loop $ h_2^t $, $ t \in [0,4] $ is homotopic to the loop $ h_3^t $, $ t \in [0,3] $, where $ h_3^t = \psi_1^t $ for $ t \in [0,1] $, where for any $ x \in M $ the path $ h_3^t(x) $, $ t \in [1,2] $ is the shortest $ g $-geodesic between $ \psi_1(x) $ and $ \psi_2(x) $, and where $ h_3^t = \psi_2^{3-t} $ for $ t \in [2,3] $. It is enough to show that the value of the evaluation map $ ev $ at $ h_3^t $, $ t \in [0,3] $ equals $ e \in \pi_1(M) $. Now pick some  increasing bijective smooth function $ \nu : [0,1] \rightarrow [0,1] $, such that the derivatives of $ \nu $ of all orders vanish at $ 1 \in [0,1] $, and look at the smooth Hamiltonian flow $ h^t $, $ t \in [0,2] $ defined by $ h^{t} = \psi_1^{\nu(t)} $ for $ t \in [0,1] $ and $ h^{t} = \psi_2^{\nu(2-t)} $ for $ t \in [1,2] $. Then by the solution of the Arnold's conjecture~\cite{C-Z,F1,F2,O-2,Ho-S,Li-T,F-O-1,R,F-O-2}\footnote{Strictly speaking, except for the case of a semi-positive symplectic manifold, the existing proofs of the Arnold's conjecture are conditional to the virtual cycle techniques which are not yet accepted by all the experts.}, 
the time-$2$ map $ h^2 $ of the Hamiltonian flow $ h^t $, $ t \in [0,2] $ has a fixed point $ p \in M $, such that its trajectory under the flow is a contractible loop. Hence we get $ \psi_1(p) = \psi_2(p) $, and as a result, the loop $ t \mapsto h_3^t(p) $, $ t \in [0,3] $ is contractible. Therefore the value of the evaluation map $ ev $ at the loop $ h_3^t $, $ t \in [0,3] $ equals $ e \in \pi_1(M) $.

Finally, it is easy to see the independence of $ ev' $ of the choice of metric $ g $, and that $ ev' $ is a homomorphism.

\end{proof}

The main step in the proof of Theorem~\ref{T:Main-result} is the following proposition:

\begin{proposition} \label{P:main-step-proof-main-result}
Let $ (M,\omega) $ be a closed symplectic manifold, and let $ \tilde{\phi} \in \widetilde{H}_0 $. Then $ \Flux (\tilde{\phi}) \in 
\overline{\Gamma^{a}_{top}} $, where $ a = ev'(\tilde{\phi}) \in \pi_1(M) $.
\end{proposition}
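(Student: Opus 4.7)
The plan is to construct a sequence of loops $L_n$ in $\Map_0(M)$ satisfying $ev([L_n])=a$ whose fluxes converge in $H^1(M,\mathbb{R})$ to $\Flux(\tilde{\phi})$; this immediately gives $\Flux(\tilde{\phi})\in\overline{\Gamma^a_{top}}$.

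Fix a smooth symplectic path $\{\phi^t\}_{t\in[0,1]}$ representing $\tilde{\phi}$, so $\phi:=\phi^1\in H_0$. Choose $\psi_n\in H$ with $d(\phi,\psi_n)\to 0$ together with Hamiltonian isotopies $\{\psi_n^t\}_{t\in[0,1]}$ from $\id$ to $\psi_n$. Following the construction in the proof of Lemma~\ref{L:homomo}, I form a loop $L_n:[0,3]\to\Map_0(M)$ based at $\id$ by concatenating $L_n^t=\phi^t$ on $[0,1]$, a small interpolation from $\phi$ to $\psi_n$ inside $\Map_0(M)$ on $[1,2]$, and $L_n^t=\psi_n^{3-t}$ on $[2,3]$. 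By the very definition of $ev'$ in Lemma~\ref{L:homomo}, the class $[L_n]\in\pi_1(\Map_0(M))$ satisfies $ev([L_n])=ev'(\tilde{\phi})=a$, and therefore $\Flux([L_n])\in\Gamma^a_{top}$.

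Using the integration-along-the-fibre expression $\Flux([L_n])[\gamma]=\int_{[0,3]\times\gamma}L_n^{\ast}\omega$ and additivity over the three subintervals, I decompose the flux: the first segment contributes $\Flux(\tilde{\phi})[\gamma]$, the last segment contributes $-\Flux(\tilde{\psi}_n)[\gamma]=0$ since $\{\psi_n^t\}$ is Hamiltonian, and the middle segment contributes a correction term $\eta_n\in H^1(M,\mathbb{R})$. Hence $\Flux([L_n])=\Flux(\tilde{\phi})+\eta_n$, and the proposition reduces to showing $\eta_n\to 0$ in the norm $|\cdot|$.

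The main obstacle is precisely this bound on $\eta_n$, and it is here that the quantitative content of Theorem~\ref{T:flux-lipschitz} should enter. A direct pointwise estimate of $L_n^{\ast}\omega$ on the middle cylinder involves the first derivatives of $\psi_n$, which the $C^0$-convergence $\psi_n\to\phi$ does not control. I therefore propose to take the middle segment itself to be a \emph{symplectic} isotopy from $\phi$ to $\psi_n$ of $C^0$-amplitude bounded by $C\,d(\phi,\psi_n)$: this is equivalent, via post-composition with $\phi^{-1}$, to producing a small-amplitude symplectic path from $\id_M$ to the symplectomorphism $\psi_n\phi^{-1}$, which is itself $C^0$-close to $\id_M$; once such a path is available, Theorem~\ref{T:flux-lipschitz} yields $|\eta_n|\leq C\,d(\phi,\psi_n)\to 0$. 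By the $C^0$-independence argument already used in the proof of Lemma~\ref{L:homomo}, any such small-amplitude choice of middle segment gives a loop whose evaluation is still $a$, so $\Flux([L_n])$ remains in $\Gamma^a_{top}$. Constructing a small-$C^0$-amplitude symplectic interpolation between two $C^0$-close symplectomorphisms, without a priori $C^1$-control, is the technical heart of the argument and is precisely the quantitative refinement that takes the place of the Lefschetz hypothesis in Theorem~\ref{T:L-M-P}.
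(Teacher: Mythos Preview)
Your overall strategy and the first two steps coincide with the paper's proof: you form the loop $L_n$ in $\Map_0(M)$ by concatenating $\phi^t$, an interpolation, and $(\psi_n^t)^{-1}$; you observe $ev([L_n])=a$; and you decompose $\Flux([L_n])=\Flux(\tilde{\phi})+\eta_n$ with $\eta_n$ the contribution of the middle cylinder. The task is then exactly to show $\eta_n\to 0$.

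The gap is in how you propose to bound $\eta_n$. You want to replace the geodesic interpolation by a \emph{symplectic} isotopy from $\id_M$ to $\psi_n\phi^{-1}$ whose $C^0$-amplitude is controlled by $d(\phi,\psi_n)$, and then invoke Theorem~\ref{T:flux-lipschitz}. But you do not construct such an isotopy, and its existence is precisely the hard point: you only know $\psi_n\phi^{-1}\to\id_M$ in $C^0$, with no $C^1$ control, so the usual Moser/exponential-map construction of a short symplectic path is unavailable. Producing a $C^0$-small symplectic path to a symplectomorphism that is merely $C^0$-close to the identity is essentially a local $C^0$-path-connectedness statement for $\Symp_0(M,\omega)$, which is not known and is morally of the same depth as the $C^0$ flux conjecture itself. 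So the reduction to Theorem~\ref{T:flux-lipschitz} is circular as stated.

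The paper avoids this entirely. It keeps the \emph{non-symplectic} geodesic interpolation and bounds $\eta_n[\gamma]=\omega(u)$ directly, loop by loop, via a Lagrangian rigidity argument: one embeds a Weinstein neighbourhood $W(\epsilon)\times B(\epsilon)^{n-1}$ of $\alpha=\phi\circ\gamma$, places a standard Lagrangian torus $L=S^1\times S(\delta)^{n-1}$ there, and shows that if $|\omega(u)|\geq\epsilon'$ one obtains (after a suitable embedding) either a torus in $\mathbb{R}^2\times B'(\epsilon)^{n-1}$ violating Chekanov's displacement-energy bound, or an exact Lagrangian in $T^*\mathbb{T}^n$ disjoint from the zero section, contradicting Gromov. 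This is the same flavour of argument that underlies the proof of Theorem~\ref{T:flux-lipschitz}, but it is applied directly to the geodesic cylinder $u$ rather than via an auxiliary small symplectic path, and that is exactly what circumvents the construction you are missing.
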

\begin{proof} 
Choose a Riemannian metric $ g $ on $ M $. Denote by $ \phi \in G $ the endpoint of $ \tilde{\phi} $, and let $ \phi^t $, $ t \in [0,1] $ be a symplectic isotopy of $ M $ representing $ \tilde{\phi} $, such that $ \phi^0 = \id_M $, $ \phi^1 = \phi $. Choose a smooth closed loop $ \gamma : [0,1]  \rightarrow M $, and define the loop $ \alpha = \phi \circ \gamma $. There exists a neighbourhood $ U $ of $ \alpha ([0,1]) $ which is symplectomorphic to the product of a neighbourhood of the zero-section in $ T^* S^1 $ and $ n-1 $ small $2$-dimensional discs, i.e. there exists $ \epsilon > 0 $, such that for $ W(\epsilon) = \{ (q,p) \in T^* S^1 \, | \, |p| < \epsilon \} \subset T^* S^1 $ (here $ S^1 \cong \mathbb{R} / \mathbb{Z} $, so that the symplectic area of $ W(\epsilon) $ in $ T^* S^1 $ is $ 2 \epsilon $) and for the standard $2$-dimensional disc $ B(\epsilon) \subset \mathbb{R}^2 $ of area $ \epsilon $ centered at the origin, we have a symplectic embedding $ \iota : W(\epsilon) \times B(\epsilon)^{\times n-1} \rightarrow M $, such that $ S^1 \times \{ 0 \} \times ... \times \{ 0 \} $ is mapped onto $ \alpha([0,1]) $, where $ S^1 \subset T^{*} S^1 $ is the zero-section. We set $ U = \iota(W(\epsilon) \times B(\epsilon)^{\times n-1}) $. 

Now let $ \psi \in H $ be sufficiently $ C^0 $-close to $ \phi $, and let $ \psi^t $, $ t \in [0,1] $ be a Hamiltonian isotopy on $ M $ such that $ \psi^1 = \psi $. Define, as in the proof of Lemma~\ref{L:homomo}, 
a continuous loop $ f^t $, $ t \in [0,3] $ in $ \Map_0(M) $, such that $ f^t = \phi^{t} $ for $ t \in [0,1] $, such that for any $ x \in M $, the path $ f^t(x) $, $ t \in [1,2] $ is the shortest $ g $-geodesic connecting $ \phi(x) $ and $ \psi (x) $, and such that $ f^t = \psi^{3-t} $ for $ t \in [2,3] $. Then since $ \psi $ is sufficiently $ C^0 $-close to $ \phi $, Lemma~\ref{L:homomo} tells us that the value of $ ev $ at the loop $ f^t $, $ t \in [0,3] $ equals to $ ev'(\tilde{\phi}) $. Define smooth cylinders $ w,u,v : [0,1] \times [0,1] \rightarrow M $ by $ w(s,t) = f^t(\gamma(s)) = \phi^t(\gamma(s)) $, $ u(s,t) = f^{t+1}(\gamma(s)) $, $ v(s,t) = f^{t+2}(\gamma(s)) = \psi^{1-t}(\gamma(s)) $, for $ (s,t) \in [0,1] \times [0,1] $. The loop $ \beta := \psi \circ \gamma $ is $ C^0 $-close to the loop $ \alpha = \phi \circ \gamma $, hence the image $ \beta([0,1]) $ lies inside $ U $, and moreover the image $ u([0,1] \times [0,1]) $ lies inside $ U $.  The union of the images of $ w,  u , v $ is a torus, which is the isotopy of the loop $ \gamma $ via the path $ f^t $, $ t \in [0,3] $. We therefore have the equality $ \omega(w) + \omega(u) + \omega(v) = \Flux(f^t)(\gamma) $. We have $ \omega(w) = \Flux(\phi^t)(\gamma) $, and $ \omega(v) = 0 $ since the isotopy $ \psi^{t} $, $ t \in [0,1] $ is Hamiltonian. Thus we get $ \Flux(f^t)(\gamma) - \Flux(\phi^t)(\gamma) = \omega(u) $. Hence it is enough to show that for any initially chosen loop $ \gamma : [0,1] \rightarrow M $ as above, the symplectic area $ \omega(u) $ is arbitrarily small, provided that $ \psi $ is sufficiently $ C^0 $-close to $ \phi $.

Let us show this by a contradiction. Assume the contrary, i.e. that there exists some $ \epsilon' > 0 $, such that one can find $ \psi $ arbitrarily $ C^0 $-close to $ \phi $ for which we have $ | \omega(u) | \geqslant \epsilon' $. WLOG we may assume that $ \epsilon' < \epsilon $. 
Now pick $ \psi \in H $ which is sufficiently $ C^0 $-close to $ \phi $ and such that $ | \omega(u) | \geqslant \epsilon' $.
Recall that we have a symplectic embedding $ \iota : W(\epsilon) \times B(\epsilon)^{\times n-1} \rightarrow M $, such that $ S^1 \times \{ 0 \} \times ... \times \{ 0 \} $ is mapped onto $ \alpha([0,1]) $, and we have $ U = \iota(W(\epsilon) \times B(\epsilon)^{\times n-1}) $. Put $ \delta = \epsilon' /2 $, and consider the Lagrangian $ L = S^{1} \times S(\delta) \times ... \times S(\delta) \subset W(\epsilon') \times B(\epsilon')^{\times n-1} \subset W(\epsilon) \times B(\epsilon)^{\times n-1} $, where $ S(\delta) = \{ z \in \mathbb{R}^2 \, | \, \pi |z|^2 = \delta \} $ is the circle on $ \mathbb{R}^2 $ centered at the origin enclosing a disc of area $ \delta $. Since $ \psi $ is sufficiently $ C^0 $-close to $ \phi $, it follows that $ \psi \circ \phi^{-1} $ is sufficiently $ C^0 $-close to $ \id_M $, and in particular $ \psi \circ \phi^{-1} ( \iota (W(\epsilon') \times B(\epsilon')^{\times n-1}) ) \subset U = \iota (W(\epsilon) \times B(\epsilon)^{\times n-1}) $, and $ \psi \circ \phi^{-1} (\iota(L)) \subset \iota (W(\epsilon'/4) \times B'(\epsilon)^{\times n-1}) $ (recall that $ B'(\epsilon) = B(\epsilon) \setminus \{ 0 \} \subset \mathbb{R}^2 $ is the open punctured euclidean disc centered at the origin having area $ \epsilon $). Now, if we denote $ \tilde{L} := \iota^{-1}(\psi \circ \phi^{-1} (\iota(L))) \subset W(\epsilon'/4) \times B'(\epsilon)^{\times n-1} $, then $ \tilde{L} $ is a Lagrangian which is $ C^0 $ close to $ L $, and $ \pi_1(\tilde{L}) $ is generated by the loops $ \tilde{\beta}_1, ..., \tilde{\beta}_n $, which are the push-forwards of the loops $ \beta_1,...,\beta_n $ on $ L = S^1 \times S(\delta) \times ... \times S(\delta) $, such that the homotopy classes of $ \beta_1,...,\beta_n $ in $ \pi_1(L) $ correspond to the factors $ S^1,S(\delta),...,S(\delta) $. Consider the $1$-form $ \lambda = p_0 dq_0 + \frac{1}{2}(x_1 dy_1 - y_1 dx_1) + ... + \frac{1}{2}(x_{n-1} dy_{n-1} - y_{n-1} dx_{n-1}) =  p_0 dq_0 + \frac{1}{2} r_1^2 d\theta_1 + ... + \frac{1}{2} r_{n-1}^2 d\theta_{n-1} $ on $ W(\epsilon) \times B(\epsilon)^{\times n-1} $. Then $ d \lambda = dp_0 \wedge dq_0 + dx_1 \wedge dy_1 + ... + dx_{n-1} \wedge dy_{n-1} $ is the standard symplectic form on $ W(\epsilon) \times B(\epsilon)^{\times n-1} $.
Since the map $ \iota^{-1} \circ \psi \circ \phi^{-1} \circ \iota $ is well defined on $ W(\epsilon') \times B(\epsilon')^{\times n-1} $, and is symplectic, the $1$-form $ (\iota^{-1} \circ \psi \circ \phi^{-1} \circ \iota)^* \lambda - \lambda $ on $ W(\epsilon') \times B(\epsilon')^{\times n-1} $ is closed, and its evaluation at the loop $ S^1 \times \{ 0 \} \times ... \times \{ 0 \} \subset W(\epsilon') \times B(\epsilon')^{\times n-1} $ equals to the symplectic area $ \omega(u) $. Hence at the level of cohomology we have $ [ (\iota^{-1} \circ \psi \circ \phi^{-1} \circ \iota)^* \lambda - \lambda ] = \omega(u) [dq] $. In particular, we have $ \lambda(\tilde{\beta}_1) =  (\iota^{-1} \circ \psi \circ \phi^{-1} \circ \iota)^* \lambda (\beta_1) = \omega(u) + \lambda(\beta_1) = \omega(u)  $, and for $ 2 \leqslant j \leqslant n $ we have $ \lambda(\tilde{\beta}_j) =  (\iota^{-1} \circ \psi \circ \phi^{-1} \circ \iota)^* \lambda (\beta_j) = \lambda(\beta_j) = \delta $. 
Now let us present two possible ways of finishing the proof via arriving to a contradiction. The second way is easier and it was suggested by Seyfaddini.

{\bf First way:} 
Consider the $1$-form $ \lambda' =  \frac{1}{2}(x_0 dy_0 - y_0 dx_0) + \frac{1}{2}(x_1 dy_1 - y_1 dx_1) + ... + \frac{1}{2}(x_{n-1} dy_{n-1} - y_{n-1} dx_{n-1}) = \frac{1}{2} r_0^2 d \theta_0 + \frac{1}{2} r_1^2 d\theta_1 + ... + \frac{1}{2} r_{n-1}^2 d\theta_{n-1} $ on $ B(\epsilon' / 2) \times B'(\epsilon)^{\times n-1} $ endowed with coordinates $ (x_0,y_0,...,x_{n-1},y_{n-1}) $, where $ x_i = r_i \cos \theta_i $, $ y_i = r_i \sin \theta_i $ for $ i = 0,1,...,n-1 $. We have that $ d \lambda' = dx_0 \wedge dy_0 + dx_1 \wedge dy_1 + ... + dx_{n-1} \wedge dy_{n-1} $ is the standard symplectic form on $ B(\epsilon' / 2) \times B'(\epsilon)^{\times n-1} $. Consider the embedding $ \iota' : W(\epsilon' / 4) \times B'(\epsilon)^{\times n-1} \hookrightarrow B(\epsilon' / 2) \times B'(\epsilon)^{\times n-1} $, given by $ \iota'(q_0,p_0,x_1,y_1,...,x_{n-1},y_{n-1}) = (r_0,\theta_0,x_1,y_1,...,x_{n-1},y_{n-1}) $, $ \pi r_0^2 = p_0 + \epsilon'/4 $, $ \theta_0 = 2 \pi q_0 $. Then we have $ (\iota')^* \lambda' = \lambda + \frac{1}{4}\epsilon' dq_0 $. Hence for the Lagrangian $ \hat{L} = \iota'(\tilde{L}) \subset B(\epsilon' / 2) \times B'(\epsilon)^{\times n-1} \subset \mathbb{R}^{2} \times B'(\epsilon)^{\times n-1} $ and the loops $ \hat{\beta_j} := \iota' \circ \tilde{\beta}_j $, $ j = 1,...,n $, generating $ \pi_1(\hat{L}) $, we have $ \lambda'(\hat{\beta}_1) = \omega(u) + \epsilon'/4 $. Therefore, if we consider $ \hat{L} $ as a Lagrangian inside $ \mathbb{R}^{2} \times B'(\epsilon)^{\times n-1} $ endowed with the standard symplectic form, then it follows that the symplectic area of any disc in $ \mathbb{R}^{2} \times B'(\epsilon)^{\times n-1} $ with boundary on $ \hat{L} $, is an integer multiple of $ \omega(u) + \epsilon'/4 $, and hence its absolute value is $ \geqslant | \omega(u) | - \epsilon' / 4 \geqslant \epsilon' - \epsilon' /4 = 3 \epsilon' / 4 > \epsilon' /2 $. By the Chekanov's theorem~\cite{Ch}, the displacement energy $ e(\hat{L}) $ of $ \hat{L} $ inside $ \mathbb{R}^{2} \times B'(\epsilon)^{\times n-1} $ is greater than or equal to the minimal area of a non-constant holomorphic disc with boundary on $ \hat{L} $. Hence we conclude that $ e(\hat{L}) > \epsilon'/2 $. But on the other hand, since $ \hat{L} \subset B(\epsilon'/2) \times B'(\epsilon)^{\times n-1} $, one can clearly displace $ \hat{L} $ with a Hamiltonian isotopy of energy $ \epsilon'/2 $. Contradiction.

{\bf Second way:} 
Consider the Liouville form $ \lambda' = p_0 dq_0 + p_1 dq_1 + ... + p_{n-1} dq_{n-1} $ on $ T^* \mathbb{T}^{n} $. Let $ \iota' : W(\epsilon' / 4) \times B'(\epsilon)^{\times n-1} \rightarrow T^* \mathbb{T}^n $ be the symplectic embedding given by $$ \iota'(q_0,p_0,r_1,\theta_1,...,r_{n-1},\theta_{n-1}) = (q_0,p_0,q_1,p_1,...,q_{n-1},p_{n-1}) ,$$ $ \pi r_i^2 - \delta = p_i $, $ \theta_i = 2 \pi q_i $ for $ i = 1,2,...,n-1 $. The image of $ \iota' $ lies inside $ W(\epsilon'/4) \times (T^* S^1)^{\times n-1} = W(\epsilon'/4) \times T^* \mathbb{T}^{n-1} $. We have $ (\iota')^* \lambda' = \lambda - \frac{\delta}{2\pi} d\theta_1 - ... - \frac{\delta}{2\pi} d\theta_{n-1} $. Hence for the Lagrangian $ \hat{L} = \iota'(\tilde{L}) \subset W(\epsilon'/4) \times T^* \mathbb{T}^{n-1} \subset T^* \mathbb{T}^n $ and the loops $ \hat{\beta_j} := \iota' \circ \tilde{\beta}_j $, $ j = 1,...,n $, generating $ \pi_1(\hat{L}) $, we have $ \lambda'(\hat{\beta}_1) = \omega(u) $, and $ \lambda'(\hat{\beta}_j) = 0 $, $ 2 \leqslant j \leqslant n $. Now consider the symplectic shift $ \Phi : T^* \mathbb{T}^{n} \rightarrow T^* \mathbb{T}^{n} $ given by $ \Phi(q_0,p_0,q_1,p_1,...,q_{n-1},p_{n-1}) = (q_0,p_0 - \omega(u),q_1,p_1,...,q_{n-1},p_{n-1}) $. Then it follows that the shifted Lagrangian $ \Phi(\hat{L}) \subset T^* \mathbb{T}^n $ is exact, and moreover it does not intersect the zero-section, since $ \hat{L} \subset W(\epsilon'/4) \times T^* \mathbb{T}^{n-1} $ and $ | \omega(u) | \geqslant \epsilon' > \epsilon' / 4 $. However, by the theorem of Gromov~\cite{G} (see section $2.3.\text{B}_4''$ in~\cite{G}), a closed exact Lagrangian 
submanifold of a cotangent bundle must intersect the zero-section. Contradiction.
\end{proof}

Now, Theorem~\ref{T:Main-result} is a straightforward consequence of Proposition~\ref{P:main-step-proof-main-result}:

\begin{proof}[Proof of Theorem~\ref{T:Main-result}]

By Proposition~\ref{P:main-step-proof-main-result}, for any $ \tilde{\phi} \in \widetilde{H}_0 $ we have $ \Flux (\tilde{\phi}) \in 
\overline{\Gamma^{a}_{top}} = \Gamma^{a}_{top}  + \overline{\Gamma^{e}_{top}} \subseteq \Gamma_{top}  + \overline{\Gamma^{e}_{top}} $, where $ a = ev'(\tilde{\phi}) \in \pi_1(M) $. Hence $ \Gamma_0 \subseteq \Gamma_{top}  + \overline{\Gamma^{e}_{top}} $.

\end{proof}

Now we turn to the proof of Theorem~\ref{T:flux-lipschitz}.

\begin{proof}[Proof of Theorem~\ref{T:flux-lipschitz}]
It is clearly enough to prove that for any smooth embedded loop $ \gamma : [0,1] \rightarrow M $, there exist constants $ c = c(M,\omega,g,\gamma), C = C(M,\omega,g,\gamma) $, such that for any path $ \phi^t $, $ t \in [0,1] $ of symplectomorphisms of $ M $, $ \phi^0 = \id_M $, $ \phi^1 = \phi $, with $ \max_{t \in [0,1]} d(\id_M,\phi^t) < c $, we have $ | \Flux(\{ \phi^t \})(\gamma) | \leqslant C d(\id_M,\phi) $. Now fix a smooth embedded loop $ \gamma : [0,1] \rightarrow M $. Then a neighbourhood of $ \gamma([0,1]) $ in $ M $ is standard, and hence for some $ \epsilon > 0 $ one can find a symplectic embedding $ \iota : W(\epsilon) \times B(\epsilon)^{\times n-1} \rightarrow M $, such that $ \iota(S^1 \times \{ 0 \} \times ... \times \{ 0 \}) = \gamma([0,1]) $. We set $$ c_1 = d( \iota( W(\epsilon / 2) \times S(\epsilon /3)^{\times n-1} ),  M \setminus \iota (W(\epsilon) \times  B'(\epsilon)^{\times n-1}  ) ) $$ (recall that the notation $ B'(\epsilon) = B(\epsilon) \setminus \{ 0 \} \subset \mathbb{R}^2 $ stands for the punctured disc). Now let $ \phi^t $, $ t \in [0,1] $ be a path of symplectomorphisms of $ M $, $ \phi^0 = \id_M $, $ \phi^1 = \phi $, with $ \max_{t \in [0,1]} d(\id_M,\phi^t) < c_1 $. Define the ``flux function" $ \kappa : [0,1] \rightarrow \mathbb{R} $ by $ \kappa (t) = \Flux(\{ \phi^{s} \}_{s \in [0,t]})(\gamma) $. We assume that $ \kappa(1) = \Flux(\{ \phi^t \})(\gamma) \neq 0 $. If $ \max_{t \in [0,1]} |\kappa(t)| > \epsilon / 3 $, then we define $ T \in [0,1] $ to be minimal such that $ | \kappa(T) | = \epsilon / 3 $, otherwise we set $ T = 1 $. We have $ | \kappa (t) | \leqslant \epsilon /3 $ for all $ t \in [0,T] $. Now, on $ \iota ( W(\epsilon) \times B(\epsilon)^{\times n-1} ) $, consider the time dependent vector field $ X_\kappa^t = \kappa'(t) \frac{\partial}{\partial p_0} $, where $ (q_0,p_0,x_1,y_1,...,x_{n-1},y_{n-1}) $ are the standard coordinates on $ W(\epsilon) \times B(\epsilon)^{\times n-1} $. Denote by $ X^t $ the time dependent symplectic vector field of the flow $ \phi^t $. The difference $ Y^t = X^t - X_\kappa^t $, $ t \in [0,T] $, is a time dependent Hamiltonian vector field on $ \iota( W(\epsilon) \times B(\epsilon)^{\times n-1} ). $ With help of a cut-off, we can find a time dependent Hamiltonian vector field $ Z^t $, $ t \in [0,T] $ on $ M $, such that $ Y^t(x) = Z^t(x) $ for any $ x \in \iota ( W(\epsilon/2) \times B(\epsilon/2)^{\times n-1} ) $ and $ t \in [0,T] $. Now look at the time dependent symplectic vector field $ \tilde{X}^t = X^t - Z^t $, $ t \in [0,T] $, on $ M $, and denote by $ \tilde{\phi}^t $, $ t \in [0,T] $ its symplectic flow on $ M $. Then $ \psi^t := (\phi^t)^{-1} \circ \tilde{\phi}^t $, $ t \in [0,T] $ is a Hamiltonian flow on $ M $ since it has zero flux at all times. Consider the Lagrangian $ L = \iota( S^1 \times S(\epsilon /3) \times ... \times S(\epsilon /3) ) \subset \iota (W(\epsilon) \times B(\epsilon)^{\times n-1} ) \subset M $. We have that $ \tilde{X}^t (x) = X_\kappa^t (x) $ for any $ x \in \iota ( W(\epsilon/2) \times B(\epsilon/2)^{\times n-1} ) $ and $ t \in [0,T] $, and hence $ \tilde{\phi}^t(\iota(q_0,p_0,x_1,y_1,...,x_{n-1},y_{n-1})) = \iota(q_0,p_0+ \kappa(t),x_1,y_1,...,x_{n-1},y_{n-1}) $ whenever $ \iota(q_0,p_0,x_1,y_1,...,x_{n-1},y_{n-1}) \in L $ and $ t \in [0,T] $, and so for any $ t \in [0,T] $, $ \tilde{\phi}^t (L) $ is obtained from $ L $ by shifting it by $ \kappa(t) $ in the ``$ p_0 $ direction". Clearly there exists a constant $ c_2 = c_2(M,\omega,g,\gamma,\iota) $, such that the distance $ d(\tilde{\phi}^{T}(L),L) $ is greater than or equal to $ c_2 | \kappa(T) | $. Also note that since $ d(\id_M,\phi^t) < c_1 $ for any $ t $, we get that $ \psi^t (L) = (\phi^t)^{-1} \circ \tilde{\phi}^t (L) \subset \iota( W(\epsilon) \times B'(\epsilon)^{\times n-1} ) $ for any $ t \in [0,T] $. Now, $ L $ cannot be Hamiltonianly displaced {\em inside} $ \iota( W(\epsilon) \times B'(\epsilon)^{\times n-1} ) $, and so we must have $ \psi^T (L) \cap L = (\phi^T)^{-1} \circ \tilde{\phi}^T (L) \cap L \neq \emptyset $. Since in addition we have $ d(\tilde{\phi}^T(L),L) \geqslant c_2 | \kappa (T) | $, we conclude that $ d(\id_M,\phi^T) \geqslant c_2 | \kappa(T) | $.

We have shown that for any path $ \phi^t $, $ t \in [0,1] $ of symplectomorphisms of $ M $, $ \phi^0 = \id_M $, $ \phi^1 = \phi $, with $ \max_{t \in [0,1]} d(\id_M,\phi^t) < c_1 $, we must have $ d(\id_M,\phi^T) \geqslant c_2 |\kappa(T)| $. Thus, if we set $ c = \min(c_1, c_2 \epsilon /3) $ and $ C = 1/ c_2 $, then for any path $ \phi^t $, $ t \in [0,1] $ of symplectomorphisms of $ M $, $ \phi^0 = \id_M $, $ \phi^1 = \phi $, with $ \max_{t \in [0,1]} d(\id_M,\phi^t) < c $, we have $ c_2 \epsilon /3 \geqslant c > d(\id_M,\phi^T) \geqslant c_2 |\kappa(T)| $, hence $ |\kappa(T)| \neq \epsilon /3 $, which means that $ T = 1 $, and we therefore get $ | \Flux(\{ \phi^t \})(\gamma) | = | \kappa (1) | = | \kappa(T) | \leqslant C d(\id_M,\phi^T) = C d(\id_M,\phi) $.

\end{proof}


\bigskip
\noindent Lev Buhovski\\
School of Mathematical Sciences, Tel Aviv University \\
{\it e-mail}: levbuh@post.tau.ac.il
\bigskip


\begin{thebibliography}{9999999999}

 \bibitem[B]{B} A. Banyaga, {\em Sur la structure du groupe des diff\'eomorphismes qui pr\'eservent une forme symplectique}, Comment. Math. Helv. {\bf 53} (1978), no. 2, 174--227 (French).
 
 \bibitem[Ch]{Ch} Yu. V. Chekanov, {\em Lagrangian intersections, symplectic energy, and areas of holomorphic curves}, Duke Math. J. Volume {\bf 95}, Number 1 (1998), 213--226.
 
 
   
 \bibitem[C-Z]{C-Z} C. C. Conley, E. Zehnder, {\em The Birkhoff-Lewis fixed point theorem and a conjecture by V. I. Arnold}, Invent. Math., Volume {\bf 73}, Issue 1, 33--49 (1983). 
   
 \bibitem[E1] {E1} Y. M. Eliashberg, {\em Rigidity of symplectic and contact structures}, preprint, 1981.

 \bibitem[E2] {E2} Y. M. Eliashberg, {\em A theorem on the structure of wave fronts and its application in symplectic topology}, Funktsional'nyi Analiz i Ego Prilozheniya {\bf 21}, no. 3, 65--72, 1987.

 \bibitem[F1]{F1} A. Floer, {\em Morse theory for Lagrangian intersections}, J. Differential Geom., Volume {\bf 28}, Number 3 (1988), 513--547.
 
 \bibitem[F2]{F2} A. Floer, {\em Symplectic fixed points and holomorphic spheres}, Comm. Math. Phys., Volume {\bf 120}, Number 4 (1989), 575--611.

 \bibitem[Fu-O-1]{F-O-1} K. Fukaya, K. Ono, {\em Arnold conjecture and Gromov--Witten invariants}, Topology {\bf 38} (1999), 933--1048.

 \bibitem[Fu-O-2]{F-O-2} K. Fukaya, K. Ono, {\em Floer homology and Gromov-Witten invariant over $ \mathbb{Z} $ of general symplectic manifolds}, Advanced Studies in Pure Math. {\bf 31} (2001), 75--91. 
 
 \bibitem[G]{G} M. Gromov, {\em Pseudoholomorphic curves in symplectic manifolds}, Invent. Math., Volume {\bf 82}, 307--347 (1985).

 \bibitem[H-S]{Ho-S} H. Hofer, D. A. Salamon, {\em Floer homology and Novikov rings}, Floer Memorial volume eds. Hofer, Taubes, Weinstein, Zehnder, Birkh\"auser, Basel, (1996).
 
 \bibitem[Hu-V]{Hu-V} V. Humili\`ere, N. Vichery, {\em to appear}.

 \bibitem[L-M-P]{L-M-P} F. Lalonde, D. McDuff, L. Polterovich, {\em On the flux conjectures}, in F. Lalonde, ed., Geometry, Topology and Dynamics. Proceedings of the CRM 1995 Workshop in Montreal, the CRM Special Series of the AMS, Vol. {\bf 15}, pp. 6985, 1998.

 \bibitem[Li-T]{Li-T} G. Liu, G. Tian, {\em Floer homology and Arnold conjecture}, J. Differential Geom. {\bf 49} (1998), 1--74.
 
 \bibitem[M]{M} D. McDuff, {\em Symplectic diffeomorphisms and the flux homomorphism}, Invent. Math. {\bf 77} (1984), 353--366.

 \bibitem[Oh-M]{Oh-M} Y.-G Oh, S. M\"uller, {\em The group of Hamiltonian homeomorphisms and $ C^0 $ symplectic topology}, J. Symp. Geom. {\bf 5}, 2007, 167--220.
 
 \bibitem[O-1]{O-1} K. Ono, {\em Floer-Novikov cohomology and the flux conjecture}, Geom. and Func. Anal. {\bf 16}, 2006, 981--1020.


 \bibitem[O-2]{O-2} K. Ono, {\em On the Arnold conjecture for weakly monotone symplectic manifolds}, Invent. Math., Volume {\bf 119}, Issue 1, 519--537 (1995).
 
 
 \bibitem[R]{R} Y. Ruan, {\em Virtual neighborhood and pseudoholomorphic curve}, Tr. J. Mathematics {\bf 23} (1999), 161--231.
 
 \bibitem[Se-1]{Sei-1} P. Seidel, {\em Abstract analogues of flux as symplectic invariants}, http://arxiv.org/abs/1108.0394
  
 \bibitem[Se-2]{Sei-2} P. Seidel, {\em Lectures on Categorical Dynamics and Symplectic Topology}, http://www-math.mit.edu/$\sim$seidel/937/lecture-notes.pdf
   
 \bibitem[Sey]{S} S. Seyfaddini, {\em A note on $C^0$ rigidity of Hamiltonian isotopies}, J. Symp. Geom., Volume {\bf 11}, Number 3, 1--8, (2013).
 
 \end{thebibliography}
\end{document}